\newtheorem{lemma}{Lemma}
\newtheorem{theorem}{Theorem}
\theoremstyle{definition}
\newtheorem{condition}{Condition}
\begin{document}

\title{Polar sets of anisotropic Gaussian random fields\thanks{This research was supported by the Deutsche Forschungsgemeinschaft through the SFB 649 "Economic Risk". The author expresses his appreciation of the guidance he has received from his thesis advisor, Markus Reiß.}}

\author{Jakob Söhl\thanks{E-mail address: soehl@math.hu-berlin.de}\\
Humboldt-Universität zu Berlin}

\date{November 13, 2009}

\maketitle

\begin{abstract}
This paper studies polar sets of anisotropic Gaussian random fields, i.e. sets which a Gaussian random field does not hit almost surely. The main assumptions are that the eigenvalues of the covariance matrix are bounded from below and that the canonical metric associated with the Gaussian random field is dominated by an anisotropic metric. We deduce an upper bound for the hitting probabilities and conclude that sets with small Hausdorff dimension are polar.
Moreover, the results allow for a translation of the Gaussian random field by a random field, that is independent of the Gaussian random field and whose sample functions are of bounded Hölder norm.
\end{abstract}

\par
\noindent
\textbf{Keywords:} Anisotropic Gaussian fields $\cdot$ Hitting probabilities $\cdot$ Polar sets~$\cdot$ Hausdorff dimension $\cdot$ European option $\cdot$ Jump diffusion $\cdot$ Calibration\\
\\
\textbf{MSC (2000):} 60G60 $\cdot$ 60G15 $\cdot$ 60G17 $\cdot$ 28A80\\
\\
\textbf{JEL Classification:} G13 $\cdot$ C14

\section{Introduction}

Anisotropic Gaussian random fields arise naturally in stochastic partial differential equations, image processing, mathematical finance and other areas.
Let $X=\{X(t)|t\in I\subset \mathbbm{R}^N\}$ be a centered Gaussian random field with values in $\mathbbm{R}^d$, where $I$ is bounded. We will call $X$ an \emph{$(N,d)$-Gaussian random field}.
The distance in the \emph{canonical metric} associated with the Gaussian random field is $\sqrt{\mathbb{E}\left[\|X(s)-X(t)\|^2\right]}$, where $\|\cdot\|$ denotes the Euclidean metric.
Polar sets of Gaussian random fields are investigated in \cite{polar} under the assumptions that the components are independent copies of the same random field, that the variance is constant and that \(\sqrt{\mathbb{E}\left[\|X(s)-X(t)\|^2\right]}\le c \|s-t\|^\beta\) holds with constants $c,\beta>0$.
The recent works \cite{anisotropic} and \cite{hitting} consider the anisotropic metric
\begin{equation}
\rho(s,t):=\sum_{j=1}^N|s_j-t_j|^{H_j}
\end{equation}
with $H\in]0,1]^N$ and assume \(\sqrt{\mathbb{E}\left[\|X(s)-X(t)\|^2\right]}\le c \rho(s,t).\)
In addition they require for the variance only to be bounded from below.
In this paper the assumptions on the variance and on the independent copies in the components are substituted by the milder assumption that the eigenvalues of the covariance matrix are bounded from below.
The random fields in the components neither need to be identical distributed nor independent.
Hence, we require weaker assumptions on the dependency structure of the components of the Gaussian random field than \cite{polar}, \cite{anisotropic} and \cite{hitting}.
It follows from an upper bound on the hitting probabilities of $X$ that sets with Hausdorff dimension smaller than $d-\sum_{j=1}^{N}1/H_j$ are polar.
Our results allow for a translation of the Gaussian random field $X$ by a random field, that is independent of $X$ and whose sample functions are Lipschitz continuous with respect to the metric $\rho$.

As an application we show that an estimator in \cite{reiss}, which calibrates an exponential Lévy model by option data, is almost surely well-defined.

\section{Main results}

Let $X$ be an $(N,d)$-Gaussian random field. Recall that we suppose the index set $I$ to be bounded. We will assume the following two conditions.

\begin{condition}\label{metric}
There is a constant $c>0$ such that for all $s,t\in I$ we have $\sqrt{\mathbb{E}\left[\|X(s)-X(t)\|^2\right]}\le c \rho(s,t)$.
\end{condition}

\begin{condition}\label{eigenvalues}
There is a constant $\lambda>0$ such that for all $t\in I$ and for all $e\in \mathbbm{R}^d$ with $\|e\|=1$ we have
$\mathbb{E}[(\sum_{j=1}^d e_j X_j(t))^2]\ge \lambda$.
\end{condition}

Condition~\ref{metric} bounds the canonical metric in terms of the anisotropic metric~$\rho$. Condition~\ref{eigenvalues} bounds the eigenvalues of the covariance matrix from below. It excludes, for example, cases where $X$ takes values only in some vector subspace.

We will use a uniform modulus of continuity, see (69) in 
\cite[p.~167]{anisotropic}.
We restate this result in the next inequality. A weaker formulation suffices for our purpose and is proved in the Appendix.
Let $X$ be an $(N,d)$-Gaussian random field, that satisfies Condition~\ref{metric}.
Then there is a version $X'$ of $X$ and a constant $\tilde c>0$ such that almost surely the following inequality holds:
\begin{equation}\label{inequmodulus}
\limsup_{\epsilon \downarrow 0}\sup_{s,t\in I, \rho(s,t)\le \epsilon}\frac{\|X'(s)-X'(t)\|}{\epsilon \sqrt{\log(\epsilon^{-1})}}\le \tilde c.
\end{equation}

We will always assume that $X$ is a version, which satisfies~\eqref{inequmodulus}.
We define by $\text{Lip}_{\rho}(L):=\{f:I\rightarrow \mathbbm{R}^d|\: \|f(s)-f(t)\|\le L \rho(s,t) \: \forall s,t\in I\}$ the \emph{$L$-Lipschitz functions} with respect to the metric $\rho$.
In each direction $j$ the functions in $\text{Lip}_{\rho}(L)$ are Hölder continuous with exponent $H_j$. We denote by $B_{\rho}(t,r):=\{s\in\mathbbm{R}^N|\rho(s,t)\le r\}$ the closed ball of radius $r$ around $t$.

\begin{lemma}\label{ball}
Let $X$ be an $(N,d)$-Gaussian random field, that satisfies Conditions~\ref{metric} and \ref{eigenvalues}.
Then for each $L\ge 0$ there is a constant $C>0$ such that for all $t\in I$,
for all $r>0$
and for all functions $f\in \text{Lip}_{\rho}(L)$ we have
\begin{equation}\label{lemma}
\mathbb{P}\left(\inf_{s\in B_{\rho}(t,r)\cap I}\|X(s)-f(s)\|\le r\right)\le C r^d.
\end{equation}
\end{lemma}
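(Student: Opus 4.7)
The plan is to pivot on a fixed point $t_0\in B_\rho(t,r)\cap I$ (assumed non-empty, otherwise the probability is zero) and reduce the event in the lemma to a Gaussian density estimate on $X(t_0)$ together with an oscillation bound. By the triangle inequality, on $\{\inf_{s}\|X(s)-f(s)\|\le r\}$, for any near-minimizer $s^*$,
\[
\|X(t_0)-f(t_0)\|\le\|X(t_0)-X(s^*)\|+\|X(s^*)-f(s^*)\|+\|f(s^*)-f(t_0)\|\le Y+(1+2L)r,
\]
where $Y:=\sup_{s\in B_\rho(t,r)\cap I}\|X(s)-X(t_0)\|$. Condition~\ref{eigenvalues} bounds the density of $X(t_0)$ by a constant $C_0$, so a density estimate on a ball of radius $V$ around $f(t_0)$ would cost $C_0\omega_d V^d$ (with $\omega_d$ the volume of the Euclidean unit ball); the goal is to carry this through with the random $V=Y+(1+2L)r$.

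The central obstacle is the dependence between $X(t_0)$ and $Y$, which forbids the naive bound $C_0\omega_d\,\mathbb{E}[(Y+(1+2L)r)^d]$. To decouple them, I would use the Gaussian regression
\[
X(s)-X(t_0)=B(s)\,X(t_0)+\eta(s),\qquad B(s):=\mathrm{Cov}\bigl(X(s)-X(t_0),X(t_0)\bigr)\,\mathrm{Var}\bigl(X(t_0)\bigr)^{-1},
\]
where $\eta$ is a centered Gaussian process independent of $X(t_0)$ with $\eta(t_0)=0$. An operator Cauchy--Schwarz estimate, Condition~\ref{metric} applied to the numerator covariance, and $\|\mathrm{Var}(X(t_0))^{-1}\|_{\mathrm{op}}\le 1/\lambda$ from Condition~\ref{eigenvalues} give $\|B(s)\|_{\mathrm{op}}\le C_B\,\rho(s,t_0)\le 2C_Br$. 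Consequently $Y\le\widetilde Y+2C_Br\|X(t_0)\|$ with $\widetilde Y:=\sup_s\|\eta(s)\|$ independent of $X(t_0)$.

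Writing $\|X(t_0)\|\le\|X(t_0)-f(t_0)\|+\|f(t_0)\|$ and absorbing the first summand (legitimate once $r\le 1/(4C_B)$), the event implies $\|X(t_0)-f(t_0)\|\le 2\widetilde Y+4C_Br\|f(t_0)\|+2(1+2L)r=:R(\widetilde Y)$. The independence of $\widetilde Y$ from $X(t_0)$ now legitimizes a conditional density estimate, and the Gaussian decay of the density of $X(t_0)$ (the largest eigenvalue of $\mathrm{Var}(X(t_0))$ is uniformly bounded in $t_0\in I$ since $I$ is bounded and Condition~\ref{metric} holds) yields
\[
\mathbb{P}\bigl(\|X(t_0)-f(t_0)\|\le R(\widetilde Y)\bigm|\widetilde Y\bigr)\le C_0\omega_d\,R(\widetilde Y)^d\exp\!\Bigl(-\tfrac{(\|f(t_0)\|-R(\widetilde Y))_+^2}{2K_1}\Bigr),
\]
so that the potentially large factor $\|f(t_0)\|$ inside $R(\widetilde Y)$ is neutralized by the Gaussian tail, keeping the product bounded by a universal constant times $\widetilde Y^d+r^d$.

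The final ingredient is $\mathbb{E}[\widetilde Y^d]=O(r^d)$, and this is where the anisotropic scaling really pays off. The canonical metric of $\eta$ is dominated by a constant multiple of $\rho$, and $B_\rho(t,r)$ has $\rho$-metric entropy $\le C(r/\epsilon)^Q$ with $Q=\sum_j 1/H_j$ for $\epsilon\le r$, so Dudley's integral---ranging only up to a diameter of order $r$---gives $\mathbb{E}[\widetilde Y]=O(r)$ rather than $O(r\sqrt{\log(1/r)})$. Combined with Borell--TIS concentration at scale $r$, this delivers $\mathbb{E}[\widetilde Y^d]\le C_1 r^d$. Integrating the conditional display over $\widetilde Y$ then produces the desired $\mathbb{P}(\inf_s\|X(s)-f(s)\|\le r)\le Cr^d$, with $C$ depending only on the structural parameters $N$, $d$, $(H_j)$, $c$, $\lambda$, $I$ and on $L$; the case $r\ge r_0$ is absorbed by enlarging $C$.
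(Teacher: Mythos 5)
Your argument is correct, but it takes a genuinely different route from the paper's. The paper proves Lemma~\ref{ball} by an explicit multi-scale chaining: it builds nets of $B_\rho(t,r)\cap I$ at scales $\epsilon_n=r\exp(-2^{n+1})$, telescopes the target event into increments $A^{(n)}\setminus A^{(n-1)}$, and for each pair of neighbouring net points performs exactly the decoupling you use globally --- a Gaussian regression of the standardized increment onto $X(t_i^{(n)})$ --- followed by elementary Gaussian tail bounds that beat the entropy growth $N_n\le c_1\exp(Q2^{n+1})$. You replace the hierarchy by a single pivot $t_0$: you regress the whole oscillation process $X(\cdot)-X(t_0)$ on $X(t_0)$ at once, note that the residual $\eta$ has canonical metric dominated by a multiple of $\rho$, and exploit that Dudley's entropy integral over a set of $\rho$-diameter $O(r)$ with polynomial covering numbers gives $\mathbb{E}\sup\|\eta\|=O(r)$ with no logarithmic loss, finishing with Borell--TIS to get $\mathbb{E}[\widetilde Y^{\,d}]=O(r^d)$. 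The one complication your route creates that the paper avoids is the term $2C_Br\|X(t_0)\|$ from the regression coefficient, which drags $\|f(t_0)\|$ into the random radius; your fix --- playing $\|f(t_0)\|^d$ against the Gaussian decay of the density of $X(t_0)$, via $\sup_{a}a^d e^{-a^2/(32K_1)}<\infty$ after a case distinction on whether $\|f(t_0)\|$ dominates $\widetilde Y+r$ --- is sound and is the analogue of the paper's splitting of the exponent in its bound for $I_2$. What your approach buys is brevity and a structural explanation (polynomial $\rho$-entropy forces oscillation $O(r)$ over a ball of radius $r$; regression removes the dependence on the pinned value), at the price of invoking Dudley and Borell--TIS as black boxes, whereas the paper's chaining is longer but self-contained and needs only one-dimensional Gaussian tails. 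In a full write-up you should make explicit the uniform bound $\sup_{t\in I}\mathbb{E}\|X(t)\|^2<\infty$ (immediate from Condition~\ref{metric}, the boundedness of $I$ and finiteness at a single point), which underlies both $C_B$ and $K_1$, and the continuity of $s\mapsto B(s)$, which makes $\widetilde Y$ a measurable supremum for the continuous version of $X$.
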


\begin{proof}
For all integers $n\ge 1$ we define $\epsilon_n:=r \exp(-2^{n+1})$ and denote by $N_n:=N_{\rho}(B_{\rho}(t,r)\cap I,\epsilon_n)$ the covering number, that is the minimum number of $\rho$-balls with radii $\epsilon_n$ and centers in $B_{\rho}(t,r)\cap I$ that are needed to cover $B_{\rho}(t,r)\cap I$. We have the inclusion $B_{\rho}(t,r)\subseteq \prod_{j=1}^N[t_j-r^{1/H_j},t_j+r^{1/H_j}]$.
On the other hand each set $\prod_{j=1}^N[s_j,s_j+(\epsilon_n/N)^{1/H_j}]$ can be covered by a single ball with radius $\epsilon_n$.
Hence there is a constant $c_1>0$ independent of $n$ such that $N_n\le \prod_{j=1}^N ((2rN/\epsilon_n)^{(1/H_j)}+1)\le c_1\exp(Q2^{n+1})$ where $Q=\sum_{j=1}^N 1/H_j$.

We denote by $\{t_i^{(n)}\in B_{\rho}(t,r)\cap I|1\le i\le N_n\}$ a set of points such that the balls with the centers $\{t_i^{(n)}\}$ and radii $\epsilon_n$ cover $B_\rho(t,r)\cap I$. We define \[r_n:=\beta \epsilon_n 2^{\frac{n+1}{2}},\]
 where $\beta> \tilde c$ is some constant to be determined later. For all integers $n,k\ge 1$ and $1\le i \le N_k$, we define the following events
\begin{align}
A_i^{(k)}&:=\Bigl\{\|X(t_i^{(k)})-f(t_i^{(k)})\|\le  r+\sum_{l=k}^\infty r_l\Bigr\},\\
A^{(n)}&:=\bigcup_{k=1}^n\bigcup_{i=1}^{N_k}A_i^{(k)}=A^{(n-1)}\cup \bigcup_{i=1}^{N_n}A^{(n)}_i,\label{A}
\end{align}
where the last equality only holds for $n\ge2$.
We will show that the probability in \eqref{lemma} can be dominated by the limit of the probabilities of the sets $A^{(n)}$
\begin{equation}\label{keystep}
\mathbb{P}\left(\inf_{s\in B_{\rho}(t,r)\cap I}\|X(s)-f(s)\|\le r\right)\le \lim_{n \rightarrow \infty }\mathbb{P}(A^{(n)}).
\end{equation}
For all $s\in B_{\rho}(t,r)\cap I$ and all $n\ge 1$ there exists $i_n$ such that $\rho(s,t_{i_n}^{(n)})\le \epsilon_n$. By \eqref{inequmodulus} we obtain almost surely
\[\limsup_{n\rightarrow \infty} \, \sup_{s\in I}\frac{\|X(s)-X(t_{i_n}^{(n)})\|}{r_n}\le \frac{\tilde c}{\beta}<1,\]
where the supremum over $s$ is to be understood such that $i_n$ varies according to $s$. Let $\kappa\in\; ]\tilde c/\beta,1[.$
Especially there is $N$ such that for all $n\ge N$ we have
\begin{equation}\label{supremum}
\sup_{s\in I}\frac{\|X(s)-X(t_{i_n}^{(n)})\|}{r_n}\le \kappa.
\end{equation}
By going over to a possibly greater constant $N$ we ensure that $(1-\kappa)\tilde c 2^{\frac{N+1}{2}}\ge L$.
On the event $\inf_{s\in B_{\rho}(t,r)\cap I}\|X(s)-f(s)\|\le r$ there exists $s_0\in B_{\rho}(t,r)\cap I$ such that
\begin{equation}\label{inf}
\|X(s_0)-f(s_0)\|\le r+\sum_{l=N+1}^{\infty}r_l.
\end{equation}
Choose $i_N$ such that $\rho(s_0,t_{i_N}^{(N)}) \le \epsilon_N$.
Using \eqref{supremum}, \eqref{inf} and the Lipschitz continuity of $f$ we obtain
\begin{align*}
&\|X(t_{i_N}^{(N)})-f(t_{i_N}^{(N)})\|\displaybreak[0]\\
\le&\|X(t_{i_N}^{(N)})-X(s_0)\|+\|X(s_0)-f(s_0)\| +\|f(s_0)-f(t_{i_N}^{(N)})\|\displaybreak[0]\\
\le& \kappa r_N+r+\sum_{l=N+1}^{\infty}r_l+L\rho(s_0,t_{i_N}^{(N)})\displaybreak[0]\\
\le& \kappa r_N+r+\sum_{l=N+1}^{\infty}r_l+(1-\kappa)\tilde c 2^{\frac{N+1}{2}} \epsilon_N
\le r+\sum_{l=N}^{\infty}r_l
\end{align*}
and \eqref{keystep} is established.

Trivially we have for $n\ge 2$
\[\mathbb{P}(A^{(n)})\le \mathbb{P}(A^{(n-1)}) + \mathbb{P}(A^{(n)}\backslash A^{(n-1)})\]
and by \eqref{A} we have
\[\mathbb{P}(A^{(n)}\backslash A^{(n-1)})\le \sum_{i=1}^{N_n}\mathbb{P}(A_i^{(n)}\backslash A_{i'}^{(n-1)}),\]
where $i'$ is chosen such that $\rho(t_i^{(n)},t_{i'}^{(n-1)})<\epsilon_{n-1}$.
We note that for $n\ge 2$
\begin{align}
 &\mathbb{P}(A_i^{(n)}\backslash A_{i'}^{(n-1)})\label{probability}\\
=&\mathbb{P}\left( \|X(t_i^{(n)})-f(t_i^{(n)})\|  \le r+\sum_{l=n}^{\infty}r_l,  \|X(t_{i'}^{(n-1)})-f(t_{i'}^{(n-1)})\| > r+\sum_{l=n-1}^{\infty}r_l\right)\notag\\
\le&\mathbb{P}\left( \|X(t_i^{(n)})-f(t_i^{(n)})\|  \le c_2\,r,  \|X(t_{i}^{(n)})-X(t_{i'}^{(n-1)})\| > r_{n-1}-L\epsilon_{n-1}\right)\notag\\
\le&\mathbb{P}\left( \|X(t_i^{(n)})-f(t_i^{(n)})\|  \le c_2\,r,  \|X(t_{i}^{(n)})-X(t_{i'}^{(n-1)})\| > (\beta 2^{\frac{n}{2}}-L)\epsilon_{n-1}\right),\notag
\end{align}
where $c_2=1+\beta \sum_{l=1}^{\infty}2^{\frac{l+1}{2}}\exp(-2^{l+1})$.
We ensure $(\beta 2^{\frac{n}{2}}-L)>0$ by choosing $\beta>L/2$.
The idea is to rewrite $X(t_{i}^{(n)})-X(t_{i'}^{(n-1)})$ as a sum of two terms, one expressed by $X(t_{i}^{(n)})$ and the other independent of $X(t_{i}^{(n)})$.

$\text{Lip}_{\rho}(L)$ is invariant under orthogonal transformations.
By the spectral theorem we may choose new coordinates such that the covariance matrix at $t_i^{(n)}$ is diagonal.
Then the components of $X(t_i^{(n)})$ are independent.
By assumption $\sigma_j(s):=\sqrt{\mathbb{E}\left[X_j(s)^2\right]}>0$.
We define the standard normal random variables
\[Y_j(s):=\frac{X_j(s)}{\sigma_j(s)}.\]
Note that $\mathbb{E}[Y(t_i^{(n)})Y(t_i^{(n)})^{\top}]=\mbox{Id}$ holds.
If $\mathbb{E}\left[(X_j(s)-X_j(t))^2\right]>0$ we define \[Y_j(s,t):=\frac{X_j(s)-X_j(t)}{\sqrt{\mathbb{E}\left[(X_j(s)-X_j(t))^2\right]}}\]
and $Y_j(s,t):=0$ otherwise. We further define a matrix $\eta$ and a random vector $Z$ by
\begin{align*}
\eta&:=\mathbb{E}\left[Y(t_i^{(n)},t_{i'}^{(n-1)})Y(t_i^{(n)})^{\top}\right],\\
Z(t_i^{(n)},t_{i'}^{(n-1)})&:=Y(t_i^{(n)},t_{i'}^{(n-1)})-\eta Y(t_i^{(n)}).
\end{align*}
We observe that $|\eta_{jk}|\le 1$ and hence in the operator norm $\|\eta\|\le d$.
The random vectors $Z(t_i^{(n)},t_{i'}^{(n-1)})$ and $Y(t_i^{(n)})$ are independent because the covariance matrix is the zero matrix.
By the definition of $Y(t_i^{(n)})$ we see that $Z(t_i^{(n)},t_{i'}^{(n-1)})$ and $X(t_i^{(n)})$ are independent, too.
We want to bound $\mathbb{P}(A_i^{(n)}\backslash A_{i'}^{(n-1)})$. If $t_i^{(n)}=t_{i'}^{(n-1)}$ then $\mathbb{P}(A_i^{(n)}\backslash A_{i'}^{(n-1)})=0$ holds. Thus we may assume that $\rho(t_i^{(n)},t_{i'}^{(n-1)})>0$.
\eqref{probability}~is bounded by
\begin{align*}
& \mathbb{P}\left( \|X(t_i^{(n)})-f(t_i^{(n)})\|  \le c_2\,r, \; \|Y(t_{i}^{(n)},t_{i'}^{(n-1)})\| > \frac{(\beta 2^{\frac{n}{2}}-L)\epsilon_{n-1}}{c \, \rho(t_{i}^{(n)},t_{i'}^{(n-1)})}\right)\\
\le & \mathbb{P}\left( \|X(t_i^{(n)})-f(t_i^{(n)})\|  \le c_2\,r, \; \|Z(t_i^{(n)},t_{i'}^{(n-1)})\|+\|\eta Y(t_i^{(n)})\| >
\frac{\beta 2^{n/2}-L}{c} \right)\displaybreak[0]\\
\le& \mathbb{P}\left( \|X(t_i^{(n)})-f(t_i^{(n)})\|  \le c_2\,r, \; \|Z(t_i^{(n)},t_{i'}^{(n-1)})\| >
\frac{\beta 2^{n/2}-L}{2c} \right)\\
+&\mathbb{P}\left( \|X(t_i^{(n)})-f(t_i^{(n)})\|  \le c_2\,r, \; d\| Y(t_i^{(n)})\| >
\frac{\beta 2^{n/2}-L}{2c} \right)\\
:=&I_1+I_2.
\end{align*}
Each component of $Z(t_i^{(n)},t_{i'}^{(n-1)})$ is a weighted sum of at most $d+1$ standard normal random variables with weights in $[-1,1]$. Hence the variance of each component is at most $(d+1)^2$. In the following $c_l$  with $l \in \mathbbm{N}$ will denote positive constants.
By the independence of $X(t_i^{(n)})$ and $Z(t_i^{(n)},t_{i'}^{(n-1)})$ we have
\begin{align*}
I_1=&\mathbb{P}\left( \|X(t_i^{(n)})-f(t_i^{(n)})\|  \le c_2 \, r\right) \mathbb{P} \left( \|Z(t_i^{(n)},t_{i'}^{(n-1)})\| > \frac{\beta 2^{n/2}-L}{2c} \right)\displaybreak[0]\\
\le& c_3 \, r^d \mathbb{P} \left( \|Z(t_i^{(n)},t_{i'}^{(n-1)})\| > \frac{\beta 2^{n/2}-L}{2c} \right)\displaybreak[0]\\
\le& c_3 \, r^d  \,  \frac{2d}{\sqrt{2\pi}}\frac{2\sqrt{d}(d+1)c}{\beta 2^{\frac{n}{2}}-L} \exp\left(-\frac{(\beta 2^{\frac{n}{2}}-L)^2}{8 c^2 d (d+1)^2}\right)\displaybreak[0]\\
\le& c_4 \, r^d \exp\left(-\frac{(\beta 2^{\frac{n}{2}}-L)^2}{8c^2d (d+1)^2}\right).
\end{align*}
By the definition of $Y(t_i^{(n)})$ we have with the abbreviation
$\sigma_j=\sigma_j(t_i^{(n)})$
\begin{align*}
I_2\le&\int_{\{\|u-f(t_i^{(n)})\|\le c_2 \, r, \; \|(\frac{u_k}{\sigma_k})_k\|>
\frac{\beta 2^{n/2}-L}{2dc}\}}
\left(\frac{1}{2\pi}\right)^\frac{d}{2}\frac{1}{\sigma_1\cdots\sigma_d} e^{-\frac{1}{2}\left(\frac{u_1^2}{\sigma_1^2}+\dots+\frac{u_d^2}{\sigma_d^2}\right)}\mathrm{d}u\displaybreak[0]\\
\le&\int_{\{\|u-f(t_i^{(n)})\|\le c_2 \, r\}}
\left(\frac{1}{2\pi}\right)^\frac{d}{2}\frac{1}{\sigma_1\cdots\sigma_d} e^{-\frac{1}{4}\left(\frac{u_1^2}{\sigma_1^2}+\dots+\frac{u_d^2}{\sigma_d^2}\right)}\mathrm{d}u\;
e^{-\frac{1}{4}\left(\frac{(\beta 2^{\frac{n}{2}}-L)^2}{4d^2c^2}\right)}\displaybreak[0]\\
\le&c_5 \, r^d \;
\exp\left(-\frac{(\beta 2^{\frac{n}{2}}-L)^2}{16d^2c^2}\right).\\
\intertext{To sum it up}
\mathbb{P}(A^{(n)})\le&\mathbb{P}(A^{(n-1)})+c_6\, r^d N_n \exp \left(-\frac{(\beta 2^{\frac{n}{2}}-L)^2}{16d(d+1)^2c^2} \right)\\
\le&\mathbb{P}(A^{(1)})+c_6\, r^d \sum_{k=2}^\infty N_k \exp \left(-\frac{(\beta 2^{\frac{k}{2}}-L)^2}{16d(d+1)^2c^2} \right)\\
\le&c_7r^d+c_6\, r^d \sum_{k=2}^\infty \,c_1 \, \exp \left(Q2^{k+1}-\frac{(\beta 2^{\frac{k}{2}}-L)^2}{16d(d+1)^2c^2} \right).
\end{align*}
We choose $\beta>\max(\tilde c,L/2)$ such that $\frac{\beta^2}{16d(d+1)^2c^2} >2Q$. Then the sum converges and the lemma follows by \eqref{keystep}.
\end{proof}

Let $\mathcal{H}_\alpha$ denote the $\alpha$-dimensional \emph{Hausdorff measure}, see the definition, for instance, in 
\cite[p. 129]{kahane}.
Recall that $Q=\sum_{j=1}^N 1/H_j$ with $H_j$ as in the definition of the metric $\rho$.

\begin{theorem}\label{polar}
Let $X$ be an $(N,d)$-Gaussian random field that satisfies Conditions~\ref{metric} and \ref{eigenvalues}.
If $Q<d$, then for each $L\ge 0$ there is a constant $C>0$ such that all Borel sets $F\subseteq \mathbb{R}^d$ and all random fields $Y$ which are independent of $X$ and whose sample functions are all in $\text{Lip}_{\rho}(L)$ satisfy
\begin{equation}
\mathbb{P}\left(\exists s\in I : X(s)+Y(s) \in F  \right)\le C \mathcal{H}_{d-Q}(F).
\end{equation}
\end{theorem}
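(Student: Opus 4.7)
The plan is to combine a Euclidean-ball covering of $F$ with the hitting estimate of Lemma~\ref{ball}, after first removing the random translation. Since $Y$ is independent of $X$ with sample paths in $\text{Lip}_{\rho}(L)$, conditioning on $Y$ and applying Fubini reduces the claim to showing that there is a constant $C>0$ such that, for every deterministic $f\in\text{Lip}_{\rho}(L)$ and every Borel set $F\subseteq\mathbb{R}^d$,
\[
\mathbb{P}\bigl(\exists s\in I : X(s)-f(s)\in F\bigr)\le C\,\mathcal{H}_{d-Q}(F),
\]
with the identification $f=-Y$. Inspection of the proof of Lemma~\ref{ball} confirms that its constant depends only on $L$, on the parameters from Conditions~\ref{metric} and~\ref{eigenvalues}, and on $d$, $N$, $I$ and $H$, so this reduction is legitimate.

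Next, I fix any cover $F\subseteq\bigcup_i B(x_i,r_i)$ by Euclidean balls whose radii are smaller than the $\rho$-diameter of $I$. The condition $X(s)-f(s)\in B(x_i,r_i)$ is equivalent to $\|X(s)-(f(s)+x_i)\|\le r_i$, and the translated function $f+x_i$ still lies in $\text{Lip}_{\rho}(L)$. To bring Lemma~\ref{ball} into play, I cover the bounded set $I$ by $\rho$-balls of radius $r_i$: the same grid construction used at the start of the proof of Lemma~\ref{ball} shows that at most $c_1\,r_i^{-Q}$ such balls suffice, where $c_1$ depends only on the diameter of $I$ and on $H$. A union bound, together with Lemma~\ref{ball} applied to each $f+x_i$, then yields
\[
\mathbb{P}\bigl(\exists s\in I : X(s)-f(s)\in B(x_i,r_i)\bigr)\le c_1\,r_i^{-Q}\cdot C_0\,r_i^d=c_2\,r_i^{d-Q}.
\]

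Summing over $i$, passing to the infimum over ever finer ball covers of $F$, and invoking the standard comparison (up to a constant depending only on $d-Q$) between Hausdorff measure defined via ball covers and via general covers then produces the desired estimate. The main point requiring care is the scale matching between the $r_i^{-Q}$ factor coming from the $\rho$-cover of $I$ and the $r_i^d$ factor furnished by Lemma~\ref{ball}: the hypothesis $Q<d$ is used precisely here to guarantee that $r_i^{d-Q}\to 0$ as $r_i\to 0$, so that small balls contribute in a controlled way to the sum and the passage to the Hausdorff measure is meaningful. A secondary technicality is to check that the constant in Lemma~\ref{ball} is genuinely uniform over the Lipschitz functions $f+x_i$, but this is immediate from its statement since $x_i$ is a constant shift and thus preserves the Lipschitz class.
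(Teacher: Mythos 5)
Your argument is correct and follows essentially the same route as the paper: Fubini to reduce to a deterministic Lipschitz translate, a ball cover of $F$ realizing the Hausdorff measure, a cover of $I$ by $c_1 r_i^{-Q}$ many $\rho$-balls of radius $r_i$, and Lemma~\ref{ball} applied to each (shifted) translate followed by a union bound. The only cosmetic differences are the sign convention $f=-Y$ and your explicit remark about comparing ball covers with general covers, which the paper absorbs into the constant via the factor $(2r_l)^{d-Q}$.
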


\begin{proof}
By Fubini's theorem it suffices to show for all functions $f\in \text{Lip}_{\rho}(L)$
\[\mathbb{P}\left(\exists s\in I : X(s)+f(s) \in F  \right)\le C \mathcal{H}_{d-Q}(F).\]
We choose some constant $\gamma>\mathcal{H}_{d-Q}(F)$. By definition of the Hausdorff measure there is a set of balls $\{B(x_l,r_l):l=0,1,2,\dots\}$ such that
\begin{equation}\label{hausdorffmeasure}
F \subseteq \bigcup_{l=0}^\infty B(x_l,r_l) \quad \mbox{and} \quad  \sum_{l=0}^\infty(2r_l)^{d-Q}\le \gamma.
\end{equation}
For all $j$ we cut the bounded index set $I$ orthogonal to the $j$-axis with distance $(r_l/N)^{1/H_j}$ between the cuts. Each piece of $I$ can be covered by a single ball of radius $r_l$ in the metric $\rho$. Hence there is a constant $c_8>0$ such that $I$ can be covered by at most $c_8r_l^{-Q}$ balls.
We apply Lemma~\ref{ball} to these balls. By summing up we obtain
\begin{equation}\label{wholeindexset}
\mathbb{P}\left(\exists s\in I : X(s)+f(s) \in B(x_l,r_l)   \right)\le c_9 r_l^{d-Q}.
\end{equation}
By \eqref{hausdorffmeasure} and \eqref{wholeindexset} we have
\begin{align*}
&\mathbb{P}\left(\exists s\in I : X(s)+f(s) \in F   \right)\\
\le& \sum_{l=0}^\infty \mathbb{P}\left(\exists s\in I : X(s)+f(s) \in B(x_l,r_l)  \right)\le c_{10}\gamma.
\end{align*}
We have $\mathbb{P}\left(\exists s\in I : X(s)+f(s)\in F \right)\le c_{10}\mathcal{H}_{d-Q}(F)$, since $\gamma>\mathcal{H}_{d-Q}(F)$ was chosen arbitrarily.
\end{proof}

\section{Application}

In this section we apply our results to nonparametric statistics. Observations in the Gaussian white noise model lead to a Gaussian process to which then Theorem~\ref{polar} is applied. We conclude that a certain estimator is almost surely well-defined.
Existing results on Gaussian random fields have too restrictive assumptions for the application considered here.

The common approach to the estimation of Lévy processes is to take advantage of the Lévy-Khintchine representation. This often involves taking a complex logarithm of the empirical characteristic function, see e.g. \cite{reiss} and \cite{gugushvili}.
The branch of the logarithm needs to be taken such that the $\log$-characteristic function is continuous. This is referred to as the \emph{distinguished logarithm}.
To this end the empirical characteristic function has to be distinct from zero everywhere which means that zero needs to be a polar set.
For instance in the set-up of \cite{reiss} the following function is estimated:
\[\psi:\mathbbm{R}\rightarrow \mathbbm{C},v\mapsto\frac{1}{T}\log\left(1+iv(1+iv)\mathcal{FO}(v)\right),\]
where $T>0$, $\mathcal{O}$ is some $L^1(\mathbbm{R})$ function modeling option prices and $\mathcal{F}$ denotes the Fourier transform. The argument of the logarithm is distinct from zero for all $v\in\mathbbm{R}$ and the logarithm is taken such that $\psi$ is continuous with $\psi(0)=0$. The function $\mathcal{O}(x)$ is dominated by an exponentially decaying function for $|x|\rightarrow \infty$.
Especially $x\mathcal{O}(x)$ is integrable.
To simplify matters we suppose that $\mathcal{O}$ is continuously observed as in the Gaussian white noise model, cf. \cite{reiss}. For $\epsilon\in L^2(\mathbbm{R})$ we observe
\[\mathrm{d}\tilde{\mathcal{O}}(x)=\mathcal{O}(x)\mathrm{d}x+ \epsilon(x) \, \mathrm{d}W(x) \qquad \mbox{for all } x\in\mathbbm{R}. \]
Taking the Fourier transform yields
\begin{align*}
\mathcal{F}(\mathrm{d}\tilde{\mathcal{O}})(v)&=\mathcal{FO}(v)+\int_{-\infty}^\infty e^{ivx} \epsilon(x) \, \mathrm{d}W(x).
\end{align*}
Thus the canonical estimator for $\psi$ is
\[\tilde \psi (v):=\frac{1}{T}\log\left( 1+iv(1+iv)\mathcal{F} (\mathrm{d}\tilde{\mathcal{O}})(v) \right) \qquad \mbox{for all } v\in \mathbbm{R}.\]
In \cite{reiss} a trimmed log-function is used to bound the real part. But this is of no importance for the question whether the estimator $\tilde \psi$ is well-defined.
We require the following condition on $\epsilon$.

\begin{condition}\label{tail}
There is a $p>1$ such that $\int_{-\infty}^{\infty}(1+|x|)^{p}\epsilon(x)^2\mathrm{d}x<\infty$.
\end{condition}

For example, if $\epsilon\in L^2(\mathbbm{R})$ and $\epsilon(x)=O(|x|^{-p})$ for $|x|\rightarrow \infty$ with $p>1$, then the condition is satisfied. Condition~\ref{tail} and Lemma~\ref{continuity} in the Appendix imply the uniform modulus of continuity \eqref{inequmodulus} for a version of $X(v):=\int_{-\infty}^\infty e^{ivx} \epsilon(x) \, \mathrm{d}W(x)$. We will assume that $X$ is a version that satisfies~\eqref{inequmodulus}. Thus in the definition of $\tilde \psi$ the argument of the logarithm is almost surely continuous.

\begin{lemma}
Let $\epsilon$ fulfill Condition~\ref{tail}. Let $\mathcal{O}\in L^1(\mathbbm{R})$ such that $x\mathcal{O}(x)$ is integrable. Then
the estimator $\tilde \psi$ is almost surely well-defined.
\end{lemma}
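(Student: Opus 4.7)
The plan is to recast the well-definedness of $\tilde\psi$ as a polar-set statement and then apply Theorem~\ref{polar}. The distinguished logarithm admits a continuous branch on all of $\mathbbm{R}$ precisely when its argument
\[
A(v):=1+iv(1+iv)\bigl(\mathcal{F}\mathcal{O}(v)+X(v)\bigr)
\]
never vanishes. Since $A(0)=1$ deterministically, it suffices to show that almost surely $X(v)+f(v)\neq 0$ for every $v\in\mathbbm{R}\setminus\{0\}$, where
\[
f(v):=\mathcal{F}\mathcal{O}(v)+\frac{1}{iv(1+iv)}
\]
is a deterministic function. Identifying $\mathbbm{C}\cong\mathbbm{R}^2$, $X$ becomes an $(N,d)=(1,2)$-Gaussian random field. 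Writing $\mathbbm{R}\setminus\{0\}=\bigcup_n I_n$ with $I_n:=[-n,-1/n]\cup[1/n,n]$, countable subadditivity reduces the claim to proving $\mathbb{P}(\exists v\in I_n:X(v)+f(v)\in\{0\})=0$ for each $I_n$. Because $\mathcal{H}_{d-Q}(\{0\})=0$ whenever $d-Q>0$, this will follow from Theorem~\ref{polar} applied with $F=\{0\}$ and $Y\equiv f$, provided Conditions~\ref{metric} and \ref{eigenvalues} are verified on $I_n$.

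Condition~\ref{metric} is routine: splitting
\[
\mathbb{E}\|X(v)-X(u)\|^2=\int_{\mathbbm{R}}|e^{i(v-u)x}-1|^2\epsilon(x)^2\,\mathrm{d}x
\]
at $|x|=|v-u|^{-1}$ and invoking Condition~\ref{tail} yields $\mathbb{E}\|X(v)-X(u)\|^2\le C|v-u|^{\min(p,2)}$, so Condition~\ref{metric} holds with $H_1=\min(p/2,1)>1/2$ and hence $Q=1/H_1<2=d$. Next, $f$ is $C^1$ on $\mathbbm{R}\setminus\{0\}$: the rational piece by inspection, and $\mathcal{F}\mathcal{O}$ because $x\mathcal{O}(x)\in L^1$ makes $(\mathcal{F}\mathcal{O})'$ continuous and bounded. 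Thus $f$ is Lipschitz on the bounded set $I_n$, and on such a set Lipschitz continuity implies membership in $\text{Lip}_{\rho}(L)$ for a suitable $L$, since $|v-u|\le\mathrm{diam}(I_n)^{1-H_1}\rho(v,u)$.

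The main obstacle is Condition~\ref{eigenvalues}, i.e.\ ruling out uniformly on $I_n$ that the distribution of $X(v)\in\mathbbm{R}^2$ collapses onto a line. A direct computation gives, for any unit $e=(\cos\theta,\sin\theta)$,
\[
\mathbb{E}\bigl[\langle e,X(v)\rangle^2\bigr]=\tfrac{1}{2}\|\epsilon\|_{L^2}^2+\tfrac{1}{2}\,\Re\bigl(e^{-2i\theta}\,\widehat{\epsilon^2}(2v)\bigr),
\]
whose minimum over $\theta$ equals $\tfrac12\bigl(\|\epsilon\|_{L^2}^2-|\widehat{\epsilon^2}(2v)|\bigr)$. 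Assuming $\epsilon\not\equiv 0$ (the converse case being trivial), $\epsilon^2\in L^1(\mathbbm{R})$ is nonnegative with essential support of positive Lebesgue measure, so the strict inequality $|\widehat{\epsilon^2}(w)|<\|\epsilon\|_{L^2}^2$ holds for every $w\neq 0$; continuity of $\widehat{\epsilon^2}$ and compactness of $2I_n$ then upgrade this to a uniform gap $\lambda_n>0$. Theorem~\ref{polar} therefore yields $\mathbb{P}(\exists v\in I_n:X(v)+f(v)=0)\le C_n\mathcal{H}_{d-Q}(\{0\})=0$, and a union over $n$ finishes the proof.
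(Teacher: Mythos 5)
Your proposal is correct and follows essentially the same route as the paper: reduce to the nonvanishing of $X+f$ on sets $[-n,-1/n]\cup[1/n,n]$, verify Conditions~\ref{metric} and~\ref{eigenvalues} there (the paper checks Condition~\ref{eigenvalues} by writing $\mathbb{E}[\langle e,X(v)\rangle^2]=\int\sin^2(\varphi+vx)\epsilon(x)^2\,\mathrm{d}x$ and minimizing the continuous function over the compact set $I_V\times[0,2\pi]$, whereas you minimize over $\theta$ explicitly via $\tfrac12(\|\epsilon\|_{L^2}^2-|\widehat{\epsilon^2}(2v)|)$ --- an equivalent and equally valid computation), and then apply Theorem~\ref{polar} with $F=\{0\}$ using $\mathcal{H}_{d-Q}(\{0\})=0$. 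No gaps.
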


\begin{proof}
We have to show that almost surely the argument of the logarithm does not hit zero. The process $1+iv(1+iv)\mathcal{F} (\mathrm{d}\tilde{\mathcal{O}})(v)$ equals $1$ at $v=0$. It suffices to consider the process on $\mathbbm{R}\backslash \{0\}$. We rewrite the process as \[iv(1+iv)\left(\frac{1}{iv(1+iv)}+\mathcal{FO}(v)+\int_{-\infty}^\infty e^{ivx} \epsilon(x) \, \mathrm{d}W(x)\right).\]
We define
\[f(v):=\frac{1}{iv(1+iv)}+\mathcal{FO}(v)\quad\text{and}\quad X(v):=\int_{-\infty}^\infty e^{ivx} \epsilon(x) \, \mathrm{d}W(x).\]
On a bounded index set X is an (1,2)-Gaussian random field. We will apply Theorem~\ref{polar} to $X$, $Y=f$ and $F=\{0\}$.
It is proved in the Appendix that under Condition~\ref{tail}
there is a constant $c>0$ such that for all $u,v\in \mathbbm{R}$ the inequality
\begin{equation}\label{rho}
\sqrt{\mathbb{E}[\|X(u)-X(v)\|^2]}\le c |u-v|^{\min(p/2,1)}.
\end{equation}
holds. This gives reason to the definition $\rho(u,v):=|u-v|^H$ with $H=\min(p/2,1)\in\;]1/2,1]$. Thus Condition~\ref{metric} is satisfied and we have $d-Q=2-1/H>0$.

It remains to show that Condition~\ref{eigenvalues} is fulfilled and that $f$ is Lipschitz continuous with respect to the metric $\rho$.
For $\epsilon=0\in L^2(\mathbbm{R})$ we have $\tilde \psi=\psi$ and thus $\tilde \psi$ is well-defined.
We will now show that the covariance matrix of $X(v)$ is not degenerated if $\epsilon\ne 0 \in L^2(\mathbb{R})$ and $v\ne 0$. Let $e\in\mathbbm{R}^2$ such that $e_1^2+e_2^2=1$. Then there is $\varphi\in[0,2\pi]$ such that $e_1=\sin \varphi$ and $e_2=\cos \varphi$. Consider $X$ as a $\mathbbm{R}^2$-valued stochastic process. The Itô isometry yields
\begin{align*}
\mathbb{E}[(e_1X_1(v)+e_2X_2(v))^2] &=\mathbb{E}\left[\left(\int_{-\infty}^\infty (e_1\cos(vx)+e_2\sin(vx)) \epsilon(x) \, \mathrm{d}W(x)\right)^2\right]\\
&=\int_{-\infty}^\infty (e_1\cos(vx)+e_2\sin(vx))^2 \epsilon(x)^2 \, \mathrm{d}x\\
&=\int_{-\infty}^\infty (\sin(\varphi+vx))^2 \epsilon(x)^2 \, \mathrm{d}x>0.
\end{align*}
The function
\[\mathbbm{R}\times[0,2\pi] \rightarrow\mathbbm{R}, (v,\varphi)\mapsto\int_{-\infty}^{\infty}(\sin(\varphi+vx))^2\epsilon(x)^2\mathrm{d}x\]
is continuous by dominated convergence.
On $([-V,-1/V]\cup[1/V,V])\times[0,2\pi]$ it takes a minimum $\lambda_V>0$ for $V>0$. Hence Condition~\ref{eigenvalues} is fulfilled on the index set $I_V=[-V,-1/V]\cup[1/V,V]$.

Since $x\mathcal{O}(x)$ is integrable we have that $\mathcal{FO}$ is Lipschitz continuous on~$\mathbbm{R}$. $1/(iv(1+iv))$ is Lipschitz continuous on sets bounded away from zero. Hence $f$ is Lipschitz continuous on~$I_V$. Since $I_V$ is bounded it follows that $f$ is Lipschitz continuous with respect to the metric $\rho$ on $I_V$.

Thus we may apply Theorem~\ref{polar} to the index set $I_V=[-V,-1/V]\cup[1/V,V]$. Since $\mathcal{H}_{d-Q}(\{0\})=0$ we obtain
\(\mathbb{P}\left(\exists v\in I_V : X(v)+f(v) = 0   \right)=0.\)
Because $V>0$ was chosen arbitrarily the Lemma follows.
\end{proof}

\section*{Appendix}

\begin{lemma}
Let $X$ be an $(N,d)$-Gaussian random field that fulfills Condition~\ref{metric}.
Then there is a version $X'$ of $X$ and a constant $\tilde c>0$ with the following property. If $\epsilon_n\downarrow 0$ satisfies $\sum_{n=1}^{\infty}\epsilon_n^\vartheta <\infty$ for each $\vartheta>0$ then
\[\limsup_{n \rightarrow \infty}\sup_{s,t\in I, \rho(s,t)\le \epsilon_n}\frac{\|X'(s)-X'(t)\|}{\epsilon_n \sqrt{\log(\epsilon_n^{-1})}}\le \tilde c\]
holds almost surely.
\end{lemma}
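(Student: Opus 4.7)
The plan is to combine a local Dudley metric-entropy bound with Borell's Gaussian concentration inequality and convert the resulting tail estimate into an almost-sure statement via the Borel--Cantelli lemma, which is precisely where the summability hypothesis on $(\epsilon_n)$ enters. Since $\|v\|\le\sqrt d\max_j|v_j|$ it suffices to prove the scalar statement for each coordinate $X_j$ separately (losing only a factor $\sqrt d$ in $\tilde c$); I therefore assume $d=1$.

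The volume estimate already carried out in the proof of Lemma~\ref{ball} yields $\log N_\rho(I,u)\le C_0+Q\log(u^{-1})$ for all $u\in\;]0,1]$, and by Condition~\ref{metric} the canonical metric $\sigma(s,t):=\sqrt{\mathbb E|X(s)-X(t)|^2}$ is dominated by $c\rho$. In particular $\int_0^1\sqrt{\log N_\sigma(I,u)}\,du<\infty$, so $X$ admits a version $X'$ with almost surely continuous sample paths. For the increment process restricted to $\{(s,t):\rho(s,t)\le\epsilon_n\}$, whose diameter in $\sigma$ is at most $2c\epsilon_n$, the local form of Dudley's bound gives
\[\mathbb E[Z_n]\le K_1\int_0^{2c\epsilon_n}\sqrt{\log N_\sigma(I,u)}\,du\le K_2\,\epsilon_n\sqrt{\log(\epsilon_n^{-1})}\]
for all $n$ sufficiently large, where $Z_n:=\sup\{|X'(s)-X'(t)|:s,t\in I,\ \rho(s,t)\le\epsilon_n\}$.

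Since $Z_n$ is the supremum of a centred Gaussian family whose variance proxy is bounded by $c^2\epsilon_n^2$, Borell's inequality gives
\[\mathbb P(Z_n>\mathbb E[Z_n]+u)\le 2\exp\bigl(-u^2/(2c^2\epsilon_n^2)\bigr).\]
Choosing $u=K_3\epsilon_n\sqrt{\log(\epsilon_n^{-1})}$ with $K_3$ large enough that $\vartheta:=K_3^2/(2c^2)>0$ transforms this into $\mathbb P(Z_n>(K_2+K_3)\epsilon_n\sqrt{\log(\epsilon_n^{-1})})\le 2\epsilon_n^\vartheta$. Summability of $\sum_n\epsilon_n^\vartheta$ combined with Borel--Cantelli then forces $\limsup_n Z_n/(\epsilon_n\sqrt{\log(\epsilon_n^{-1})})\le K_2+K_3$ almost surely, which is the required inequality once the reduction to scalar components is undone (with $\tilde c=\sqrt d(K_2+K_3)$).

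The main technical point is the localisation of Dudley's integral: restricting to the scale $[0,2c\epsilon_n]$ must produce an expectation of order $\epsilon_n\sqrt{\log(\epsilon_n^{-1})}$, which relies on the polynomial growth of the covering number so that the integrand $\sqrt{\log N_\sigma(I,u)}$ behaves like $\sqrt{\log(u^{-1})}$ and the integral is controlled by its length times the integrand at the upper endpoint. Everything else --- existence of a continuous version, Gaussian concentration, Borel--Cantelli --- is standard once the local entropy estimate is in place.
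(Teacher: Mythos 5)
Your argument is correct and follows essentially the same route as the paper: a covering-number estimate for $I$ under the anisotropic metric, a Dudley-type entropy bound at scale $\epsilon_n$ giving the order $\epsilon_n\sqrt{\log(\epsilon_n^{-1})}$, a Gaussian concentration tail bound, and Borel--Cantelli using the summability of $\epsilon_n^\vartheta$. The paper merely packages the Dudley-plus-Borell step into a single cited tail inequality for the increment field $Y(t,s)=X(t+s)-X(t)$, whereas you separate expectation control and concentration and reduce to scalar components; these are presentational differences only.
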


\begin{proof}
We define the Gaussian random field $Y=\{Y(t,s)|t,(t+s)\in I\}$ by $Y(t,s):=X(t+s)-X(t)$. By Condition~\ref{metric} we have
\begin{align*}
d((t,s),(t',s'))&:=\sqrt{\mathbb{E}[\|Y(t,s)-Y(t',s')\|^2]}\\
&\le c_1 \min(\rho(0,s)+\rho(0,s'),\; \rho(s,s')+\rho(t,t')).
\end{align*}
Let $\epsilon \in \; ]0,1/4[$. We denote by $D_\epsilon$ the diameter of $T_\epsilon:=\{(t,s)\in I\times B_\rho(0,\epsilon)|t+s\in I\}$, where $B_{\rho}(0,\epsilon):=\{s\in\mathbbm{R}^N|\rho(s,0)\le \epsilon\}$. By the first argument of the minimum $D_\epsilon \le 2c_1\epsilon$ holds.
For $\delta\in\;]0,\epsilon[$ denote by $N_d(T_\epsilon,\delta)$ the covering number of $T_\epsilon$ in the metric $d$, i.e. the minimum number of d-balls with radius $\delta$ that are needed to cover $T_\epsilon$.
By the second argument of the minimum the covering number satisfies
$N_d(T_\epsilon,\delta)\le   c_2 \epsilon^{Q}/\delta^{2Q}$. By the previous estimates we obtain with the substitution $y=\delta/(c_2^{1/2Q}\sqrt{\epsilon})$
\[\int_0^{D_\epsilon}\sqrt{\log N_d(T_\epsilon,\delta)}\mathrm{d}\delta
\le \sqrt{2Q}c_2^{1/2Q}\sqrt{\epsilon} \int_0^{2c_1\sqrt{\epsilon}/c_2^{1/2Q}}\sqrt{\log{y^{-1}}} \,\mathrm{d}y.\]
By choosing a possibly larger constant $c_1$ or $c_2$ we may assume $2c_1=c_2^{1/2Q}$.
The integral is solved by
\[\int_0^x \sqrt{\log y^{-1}}\,\mathrm{d}y=\frac{\sqrt{\pi}}{2}-\frac{\sqrt{\pi}}{2}\text{Erf}(\sqrt{\log x^{-1}})+x\sqrt{\log x^{-1}},\]
where $\text{Erf(y)}=\frac{2}{\sqrt{\pi}}\int_0^y e^{-t^2}\mathrm{d}t$.
Estimating $\text{Erf}(y)$ from below yields some constant $c_3>0$ such that for all $x\in\;]0,1/2[$
\[\int_0^x \sqrt{\log y^{-1}}\,\mathrm{d}y
\le c_3 x\sqrt{\log x^{-1}}.\]
Thus for $\epsilon\in\;]0,1/4[$
\[\int_0^{D_\epsilon}\sqrt{\log N_d(T_\epsilon,\delta)}\mathrm{d}\delta\le c_4 \epsilon\sqrt{\log \epsilon^{-1/2}}\]
holds. By Lemma 2.2 in \cite{packing} there is a constant $K>0$ such that for all $u\ge2K c_4 \epsilon\sqrt{\log \epsilon^{-1/2}}$ we obtain
\[\mathbb{P}\left(\sup_{(t,s)\in T_\epsilon}\|X'(t+s)-X'(t)\|\ge u\right)\le\exp\left(-\frac{u^2}{(2KD_\epsilon)^2}\right).\]
Using the Borel-Cantelli lemma the statement follows.
\end{proof}

\begin{lemma}\label{continuity}
Let $\epsilon$ fulfill Condition~\ref{tail}.
Then there exists a number $c>0$ such that for all $u,v\in \mathbbm{R}$ the stochastic process $X(v)=\int_{-\infty}^\infty e^{ivx} \epsilon(x) \, \mathrm{d}W(x)$ satisfies
$\sqrt{\mathbb{E}[\|X(u)-X(v)\|^2]}\le c |u-v|^{\min(p/2,1)}$.
\end{lemma}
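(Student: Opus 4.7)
The plan is to apply the It\^o isometry to the real and imaginary parts of $X(u)-X(v)$, reducing the claim to a deterministic integral estimate, and then to use an elementary interpolation inequality for $|e^{iy}-1|^2$ that matches the exponent $2\min(p/2,1)$ to the tail bound in Condition~\ref{tail}.

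First I would decompose $X(v)=\int\cos(vx)\epsilon(x)\,\mathrm{d}W(x)+i\int\sin(vx)\epsilon(x)\,\mathrm{d}W(x)$ as an $\mathbbm{R}^2$-valued Wiener integral and apply the It\^o isometry to each component. Since $(\cos(ux)-\cos(vx))^2+(\sin(ux)-\sin(vx))^2=|e^{iux}-e^{ivx}|^2=2(1-\cos((u-v)x))$, this gives
\[\mathbb{E}\bigl[\|X(u)-X(v)\|^2\bigr]=2\int_{-\infty}^\infty\bigl(1-\cos((u-v)x)\bigr)\epsilon(x)^2\,\mathrm{d}x,\]
so the problem becomes purely analytic.

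Next I would use the pointwise bound $2(1-\cos y)=4\sin^2(y/2)\le\min(y^2,4)$ together with the interpolation $\min(y^2,4)\le 4^{1-\alpha}|y|^{2\alpha}$, valid for every $\alpha\in[0,1]$ (immediate by splitting into $|y|\le 2$ and $|y|>2$). Choosing $\alpha:=\min(p/2,1)\in\;]1/2,1]$ and substituting $y=(u-v)x$ gives
\[2\bigl(1-\cos((u-v)x)\bigr)\le 4^{1-\alpha}|u-v|^{2\alpha}|x|^{2\alpha}.\]
Capping $\alpha$ at $1$ is necessary because the trivial bound $|e^{iy}-1|^2\le y^2$ cannot be improved for small $|y|$, so no tail decay of $\epsilon$ can produce a H\"older exponent greater than one.

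Finally, since $2\alpha\le p$, the inequality $|x|^{2\alpha}\le(1+|x|)^p$ holds (split $|x|\le 1$ and $|x|>1$), whence Condition~\ref{tail} yields
\[\int_{-\infty}^\infty|x|^{2\alpha}\epsilon(x)^2\,\mathrm{d}x\le\int_{-\infty}^\infty(1+|x|)^p\epsilon(x)^2\,\mathrm{d}x<\infty.\]
Combining the last two displays and taking square roots produces the stated bound with $c^2=4^{1-\alpha}\int(1+|x|)^p\epsilon(x)^2\,\mathrm{d}x$. I do not expect a serious obstacle: once the It\^o isometry removes the stochastic integration, the estimate is elementary, and the borderline choice $\alpha=\min(p/2,1)$ is precisely the value that simultaneously accommodates the cosine bound near zero and the polynomial tail control of $\epsilon$ at infinity.
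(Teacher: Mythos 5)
Your proof is correct and is essentially the paper's own argument: both reduce via the It\^o isometry to $\int|e^{iux}-e^{ivx}|^2\epsilon(x)^2\,\mathrm{d}x$, bound the integrand by $\min(4,(u-v)^2x^2)$, and interpolate with exponent $q=2\alpha=\min(p,2)$ (your pointwise inequality $\min(y^2,4)\le 4^{1-\alpha}|y|^{2\alpha}$ is just the paper's split of the integral at $|x|=2|u-v|^{-1}$ carried out in one line, and the constants $4^{1-\alpha}=2^{2-q}$ agree). No issues.
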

\begin{proof}
Condition~\ref{tail} is satisfied for $q:=\min(p,2)$ as well. We conclude that:
\begin{align*}
&\mathbb{E}\left[\|X(u)-X(v)\|^{2}\right]\\
=&\mathbb{E}\left[\left| \int_{-\infty}^{\infty}(e^{iux}-e^{ivx})\epsilon(x)\mathrm{d}W(x) \right|^{2}\right]\displaybreak[0]\\
=&\int_{-\infty}^{\infty} |e^{iux}-e^{ivx}|^2\epsilon(x)^2\mathrm{d}x\displaybreak[0]\\
\le&\int_{-\infty}^{\infty}\min(4,(u-v)^2x^2)\epsilon(x)^2\mathrm{d}x\displaybreak[0]\\
=&\int_{|x| \ge 2|u-v|^{-1}}4  \epsilon(x)^2\mathrm{d}x
   + \int_{|x| < 2|u-v|^{-1}}(u-v)^2 x^2 \epsilon(x)^2\mathrm{d}x\displaybreak[0]\\
\le&\int_{|x| \ge 2|u-v|^{-1}}4 \left(\frac{|x|}{2|u-v|^{-1}}\right)^{q} \epsilon(x)^2\mathrm{d}x\\
  & + \int_{|x| < 2|u-v|^{-1}}\left(\frac{2|u-v|^{-1}}{|x|}\right)^{2-q}(u-v)^2 x^2 \epsilon(x)^2\mathrm{d}x\\
=&2^{2-q}|u-v|^{q}\int_{-\infty}^{\infty}|x|^{q}\epsilon(x)^2\mathrm{d}x
\end{align*}
This shows the lemma.
\end{proof} 

\bibliography{gauss} 

\end{document}